\newtheorem{lemma}{Lemma}
\newtheorem{theorem}{Theorem}
\newtheorem{corollary}{Corollary}
\newproof{proof}{Proof}
\newcommand\ackname{Acknowledgements}
  \newenvironment{acknowledgements}{%
      \titlepage
      \null\vfil
      \@beginparpenalty\@lowpenalty
      \begin{center}%
        \bfseries \ackname
        \@endparpenalty\@M
      \end{center}}%
     {\par\vfil\null\endtitlepage}
  \newenvironment{acknowledgements}{%
      \if@twocolumn
        \section*{\abstractname}%
      \else
        \small
        \begin{center}%
          {\bfseries \ackname\vspace{-.5em}\vspace{\z@}}%
        \end{center}%
        \quotation
      \fi}
      {\if@twocolumn\else\endquotation\fi}
\newcommand{\xs}{|X_{q+1}|}
\newcommand{\ws}{|X_{q+2}|}
\newcommand{\ys}{|X_{q}|}
\newcommand{\xss}{X_{q+1}}
\newcommand{\yss}{X_{q}}
\newcommand{\zss}{X_{\leq q-1}}
\begin{document}

\title{Extremal Graphs Without 4-Cycles}

\author[FF]{
Frank A. Firke}
%\and
\author[PK]{Peter M. Kosek}
%\and
\author[EN]{Evan D. Nash}
%\and
\author[JW]{Jason Williford}

\address[FF]{Department of Mathematics, Carleton College, Northfield, Minnesota 55057,  USA}

\address[PK]{Department of Mathematics, The College at Brockport, State University of New York, Brockport, NY 14420, USA}

\address[EN]{Department of Mathematics, University of Nebraska-Lincoln,  Lincoln, NE 68588, USA}

\address[JW]{ Department of Mathematics, University of Wyoming, Laramie, Wyoming 82071, USA}

\begin{abstract}
We prove an upper bound for the number of edges a $C_4$-free graph on $q^2+q$ vertices can contain for $q$ even. This upper bound is achieved whenever there is an orthogonal polarity graph of a plane of even order $q$.
\end{abstract}

\maketitle

Let $n$ be a positive integer and $G$ a graph. We define $ex(n,G)$ to be the largest number of edges possible in a graph on $n$ vertices that does not contain $G$ as a subgraph; we call a graph on $n$ vertices \emph{extremal} if it has $ex(n,G)$ edges and does not contain $G$ as a subgraph. $EX(n,G)$ is the set of all extremal $G$-free graphs on $n$ vertices.

The problem of determining $ex(n,G)$ (and $EX(n,G)$) for general $n$ and $G$ belongs to an area of graph theory called \emph{extremal graph theory}. Extremal graph theory officially began with Tur\'an's theorem that solves $EX(n,K_m)$ for all $n$ and $m$, a result that is striking in its precision. In general, however, exact results for $ex(n,G)$ (and especially $EX(n,G)$) are very rare; most results are upper or lower bounds and asymptotic results.  For many bipartite $G$ there is a large gap between upper and lower bounds.
%In the case when $G$ is bipartite many of the lower bounds are much smaller than the best upper bounds and are believed not to be sharp. %(This result is of such magnitude that extremal graph theory is also sometimes called Tur\'an theory.)

The question of $ex(n,C_4)$ (where $C_4$ is a cycle of length 4) has an interesting history; Erd\H{o}s originally posed the problem in 1938, and the bipartite version of this problem was solved by Reiman using a construction derived from the projective plane (see \cite{B} and the references therein for a more detailed history). Reiman also determined the upper bound $ex(n,C_4) \leq \frac n 4 (1 + \sqrt{4n-3})$ for general graphs, but this is known not to be sharp \cite{R}. Erd\H{o}s, R\'enyi, and S\'os later showed that this is asymptotically correct using a construction known as the \emph{Erd\H{os}-R\'enyi graph} derived from the orthogonal polarity graph of the classical projective plane \cite{ER} \cite{ERS}.  This is part of a more general family of graphs which we define below.  

Let $\pi$ be a finite projective plane with point set $P$ and line set $L$.  A polarity $\phi$ of $\pi$ is an involutionary permutation of $P \cup L$ which maps points to lines and lines to points and reverses containment.  We call points absolute when they are contained in their own polar image.   A polarity is called orthogonal if there are exactly $q+1$ absolute points. We define the polarity graph of $\pi$ to be the graph with vertex set $P$, with two distinct vertices $x,y$ adjacent whenever $x \in \phi(y)$.  The graph is called an orthogonal polarity graph if the polarity is orthogonal.  This graph is $C_4$-free, has $q^2$ vertices of degree $q+1$, and $q+1$ of degree $q$, for a total of $\frac 1 2 q(q+1)^2$ edges.

%This graph has vertex set equal to the set of points of the classical projective plane of order $q$, with two points adjacent if and only if their dot product is zero.  The graph is $C_4$-free, has $q^2$ vertices of degree $q+1$, and $q+1$ of degree $q$, for a total of $\frac 1 2 q(q+1)^2$ edges.This gives the bound $ex(q^2+q+1,C_4) \geq \frac 1 2 q(q+1)^2$ for all prime powers $q$.
%There is a more general construction of graphs with these properties

F\"uredi determined the first exact result that encompasses infinitely many $n$, namely that for $q>13$ we have $ex(q^2+q+1,C_4) \leq \frac 1 2 q(q+1)^2$ \cite{F1} \cite{F2}, with equality if and only if the graph is an orthogonal polarity graph of a plane of order $q$.
 In particular, this shows $ex(q^2+q+1,C_4) = \frac 1 2 q(q+1)^2$ for all prime powers $q$.  %Furthermore, he characterized all graphs meeting this bound.  He proves such graphs must be the orthogonal polarity graphs of projective planes of order $q$.  

The question of finding $ex(n,C_4)$ exactly for general $n$ appears to be a difficult problem.  Computer searches by Clapham et al. \cite{CFS} and Yuansheng and Rowlinson \cite{YR} determined $EX(n,C_4)$ for all $n \leq 31$.  
More general lower bounds are given in \cite{A} by deleting carefully chosen vertices from the Erd\H{o}s-R\'enyi graph.  It is not known if any of these bounds are sharp in general.   In particular it is not even known whether deleting a single vertex of 
degree $q$ from an orthogonal polarity graph graph yields a graph which is still extremal, a question posed by Lazebnik in 2003 \cite{L1}.
More generally, is $ex(q^2+q,C_4) \leq \frac{1}{2}q(q+1)^2-q$?  In this paper we will prove the following theorem:

\begin{theorem}\label{main}
For $q$ even, $ex(q^2+q,C_4) \leq \frac 1 2 q(q+1)^2-q$.  
\end{theorem}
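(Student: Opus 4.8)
The plan is to argue by contradiction: suppose $G$ is a $C_4$-free graph on $n=q^2+q$ vertices with at least $\frac12 q(q+1)^2-q+1$ edges, and show this forces an impossible degree sequence. It helps to keep in mind the configuration the bound describes: $\frac12 q(q+1)^2-q$ is exactly the edge count of an orthogonal polarity graph with one degree-$q$ vertex deleted, since deleting such a vertex removes $q$ edges. For $q$ even the $q+1$ absolute points are collinear, so a typical absolute point sees only non-absolute neighbours, and the resulting graph has $q^2-q$ vertices of degree $q+1$ and $2q$ of degree $q$. Accordingly I would partition $V(G)$ by degree, writing $\xss$, $\yss$, $X_{q+2}$ for the vertices of degree $q+1$, $q$, $q+2$, and $\zss$ (resp.\ $X_{q-1}$) for those of degree at most $q-1$ (resp.\ exactly $q-1$), and try to show that the edge hypothesis forces $\xs$, and hence the number of edges, above what is combinatorially attainable.

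The basic tool is the cherry count: since $G$ is $C_4$-free, every pair of vertices has at most one common neighbour, so
\[
\sum_{v} \binom{d_v}{2} \le \binom{n}{2}.
\]
Together with $\sum_v d_v = 2e(G)$ and convexity, this forces the degrees to concentrate near $q+1$ and bounds the sizes of $X_{q+2}$ and $\zss$. However, maximising $e(G)$ subject to this inequality alone (even after restricting degrees to $\{q,q+1\}$) permits roughly $q^2+\tfrac q2$ vertices of degree $q+1$, overshooting the target by about $\tfrac{3q}{4}$ edges. So the naive count is insufficient, and the additional saving must come from parity, which is where the hypothesis that $q$ is even enters.

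The key refinement is local. In a $C_4$-free graph no vertex of $N(v)$ can have two neighbours in $N(v)$ (that would close a $4$-cycle through $v$), so $N(v)$ induces a matching and contains at most $\lfloor d_v/2\rfloor$ edges. Combining this with the bound $|N_2(v)|\le n-1-d_v$ on the second neighbourhood gives, for every $v$, the estimate $\sum_{u\sim v} d_u \le n-1+2\lfloor d_v/2\rfloor$; when $d_v$ is \emph{odd} the matching on $N(v)$ must leave a vertex uncovered, and the floor strictly improves the bound by one. Summing over $v$ and rewriting in terms of cherries yields the sharpened inequality
\[
\sum_v \binom{d_v}{2} \le \binom{n}{2} - \frac{O}{2},
\]
where $O$ is the number of odd-degree vertices. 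For $q$ even the dominant degree $q+1$ is \emph{odd}, so any near-extremal $G$ has $O$ of order $q^2$; feeding this back into the optimisation drives the admissible value of $\xs$ down to about $q^2-1$ and closes most of the gap. (For $q$ odd the degree $q+1$ is even and this saving vanishes, consistent with the theorem being stated only for even $q$.)

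The hard part, and where I expect the real work to lie, is closing the residual gap: the parity-improved count still permits about $q^2-1$ vertices of degree $q+1$, whereas the extremal configuration has only $q^2-q$, a discrepancy of roughly $\tfrac{q-1}{2}$ edges. To eliminate this I would pass from aggregate counting to structure, in three steps: (i) sharpen the control on $X_{q+2}$ and $\zss$, showing that high-degree vertices are too costly in cherries to coexist with a large $\xss$ while low-degree vertices create additional pairs with no common neighbour; (ii) analyse how the vertices of $\yss$ and $\xss$ interconnect, using the $C_4$-free condition on their neighbourhoods to reconstruct a near-incidence (projective-plane-like) structure; and (iii) extract from that structure a final counting identity violated once $\xs>q^2-q$, yielding the contradiction. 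I expect step (ii) to be the principal obstacle, since it is here that one must convert the almost-tight inequalities into rigid combinatorial structure and rule out the near-extremal impostors that the inequalities alone cannot exclude.
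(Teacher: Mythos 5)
Your framework---contradiction from $E_0+1$ edges, cherry counting against $\binom{n}{2}$, and the parity refinement that an odd-degree vertex's neighbourhood induces a matching missing at least one vertex---is exactly the engine of the paper's proof, and your arithmetic is right: the aggregate inequality $\sum_v\binom{d_v}{2}\le\binom{n}{2}-O/2$ only gets you to $e\le E_0+\frac{q-1}{2}$ or so. But the two places where you defer the work are genuine gaps, and your proposed route through them (reconstructing a ``near-incidence structure'' from $\xss$ and $\yss$) is not how the gap actually closes; that kind of geometric reconstruction is what the much harder uniqueness theorem requires, not the inequality. The residual $\frac{q-1}{2}$ is recovered by keeping the parity saving \emph{local} instead of summing it away: for each $v$ of degree $q+1$ the bound $\sum_{u\sim v}d_u\le q^2+2q-1=(q+1)^2-2$ forces $v$ to have either at least two neighbours of degree $q$ or at least one neighbour of degree at most $q-1$. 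Splitting $\xss$ into these two classes and double-counting the edges into $\yss$ and into $X_{\le q-1}$ (each vertex of $\yss$ absorbs at most $q$ such edges, each vertex of $X_{\le q-1}$ at most $q-1$) gives $2\xs\le q\,|X_q|+2(q-1)|X_{\le q-1}|$, hence $\xs-|X_{\le q-1}|\le q^2-q+1$; combined with $2e\le q(q^2+q)+\xs-|X_{\le q-1}|$ and the evenness of $2e$ this yields $e\le E_0$ outright when the maximum degree is $q+1$. Your aggregate sum discards precisely the information of \emph{where} the low-degree neighbours sit, which is why it falls short.

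The second gap is larger: your sketch essentially assumes all degrees lie in $\{q-1,q,q+1\}$, but a counterexample with $E_0+1$ edges must (by the above) contain a vertex of degree $q+2$, and ruling these out is where most of the paper's length goes. One first needs a F\"uredi-type 2-path count around a maximum-degree vertex to cap the maximum degree at $q+2$, and then a chain of arguments---every vertex of degree at most $\frac q2+1$ must be adjacent to every vertex of degree $q+2$; hence the minimum degree exceeds $\frac q2+1$; yet any two degree-$(q+2)$ vertices would have to share a common neighbour of degree below $\frac q2$---to conclude there is at most one vertex of degree $q+2$, after which a finite degree-sequence check (using the exact degree sequences forced at $e=E_0$) finishes the proof. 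None of this is present in your outline, and ``high-degree vertices are too costly in cherries'' does not by itself produce these statements; each requires its own tailored 2-path inequality. So the proposal identifies the right parity phenomenon but does not constitute a proof, and its step (ii) points in a direction considerably harder than the counting arguments that actually suffice.
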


It follows that equality holds for all $q$ which are powers of 2.

The question of determining $EX(q^2+q,C_4)$ in this case is subtler; the searches referred to above showed that there are multiple constructions that achieve the bound for $q = 2, 3$, but for $q = 4,5$ there is only one. In a subsequent paper, we will prove the following: 

\begin{theorem} For all but finitely many even $q$, any $C_4$-free graph with $ex(q^2+q,C_4)$ edges is derived from an orthogonal polarity graph by removing a vertex of minimum degree. \end{theorem}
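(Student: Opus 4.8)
\section*{Proof proposal for the uniqueness theorem}

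Let $G$ be a $C_4$-free graph on $q^2+q$ vertices with $ex(q^2+q,C_4)$ edges. For the even $q$ under consideration an orthogonal polarity graph exists, so Theorem~\ref{main} together with the matching construction gives $e(G)=\frac12 q(q+1)^2-q$. The plan is to reinsert a single vertex: I will build from $G$ a $C_4$-free graph $G^{+}$ on $q^2+q+1$ vertices with exactly $\frac12 q(q+1)^2$ edges and then invoke F\"uredi's theorem, which forces $G^{+}$ to be an orthogonal polarity graph of a plane of order $q$. Since the reinserted vertex will be attached to exactly $q$ vertices, it has minimum degree in $G^{+}$, and $G=G^{+}-v$ is then exactly the asserted graph. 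Thus the whole problem reduces to constructing this one-vertex extension and proving it is forced.

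First I would determine the degree sequence of $G$ exactly. The edge count fixes $\sum_v d_v=q^{3}+2q^{2}-q$, while the $C_4$-free condition gives the cherry bound $\sum_v\binom{d_v}{2}\le\binom{q^2+q}{2}$; by convexity the degrees concentrate near the average $q+1-\frac{2}{q+1}$. The cherry bound by itself leaves slack of order $q^2$, so to reach the precise sequence---$q^2-q$ vertices of degree $q+1$ and $2q$ vertices of degree $q$, which is exactly the degree sequence of an orthogonal polarity graph with a vertex of degree $q$ removed---I would reintroduce the equality hypothesis into the stability estimates already developed for the proof of Theorem~\ref{main}. Any vertex of degree outside $\{q,q+1\}$, or any departure from these multiplicities, can be charged against the edge deficit and shown to exceed the available slack once $q$ is large.

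The crux is to recover the incidence geometry and locate the missing point. I would treat the neighbourhood $N(v)$ of each vertex as a candidate line and verify, pair by pair, that two vertices have a unique common neighbour and that two such neighbourhoods meet in a unique vertex, with every failure of these near-projective-plane axioms traceable to the single absent point. The decisive use of $q$ even is that for an orthogonal polarity of even order the $q+1$ absolute points are collinear: under the standard polarity $(x_0\!:\!x_1\!:\!x_2)\mapsto[x_0\!:\!x_1\!:\!x_2]$ they satisfy $x_0^2+x_1^2+x_2^2=(x_0+x_1+x_2)^2=0$, i.e. they fill the line $x_0+x_1+x_2=0$, whereas for $q$ odd they form a conic. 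This collinearity rigidifies the $2q$ degree-$q$ vertices of $G$ and lets me split them into two classes of size $q$: the surviving absolute points, which I can recognise from local degree data (for instance as the neighbourhood of the distinguished vertex all of whose neighbours have degree $q$), and the complementary class $S$. I then define $G^{+}$ by adjoining a vertex $v$ adjacent to precisely $S$. The axiom check ensures that no two members of $S$ share a common neighbour in $G$, so $G^{+}$ is $C_4$-free, and $|S|=q$ gives $e(G^{+})=\frac12 q(q+1)^2$.

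Applying F\"uredi's theorem to $G^{+}$ then completes the argument as in the first paragraph. The main obstacle is the rigidity in the previous step: I must show that the extremal graph admits one and only one legitimate reinsertion, that is, that no spurious choice of $S$ among the degree-$q$ vertices yields a competing $C_4$-free completion. This is precisely where ``all but finitely many'' becomes unavoidable, since the computer searches quoted above produce genuinely different extremal graphs for the smallest cases (already for $q=2$); the reconstruction must therefore be a stability argument whose error terms are controlled only beyond some threshold, and establishing that this threshold is finite is the delicate heart of the proof.
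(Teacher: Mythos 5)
This paper contains no proof of the statement you were asked about: the theorem is announced with the explicit remark that it will be proved ``in a subsequent paper,'' so there is nothing in the present text to compare your argument against step by step. Judged on its own, your overall reduction is the natural one --- extend $G$ by a single new vertex of degree $q$ to a $C_4$-free graph $G^{+}$ on $q^2+q+1$ vertices with $\frac{1}{2}q(q+1)^2$ edges, apply F\"uredi's characterization of equality (valid for $q>13$, which is where ``all but finitely many'' would enter), and note that the new vertex then has the minimum degree $q$ of an orthogonal polarity graph. If the extension exists and is forced, the conclusion follows exactly as you say. The problem is that the two steps which would produce the extension are asserted rather than proved.

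First, you claim the exact degree sequence ($q^2-q$ vertices of degree $q+1$ and $2q$ of degree $q$) can be pinned down by ``charging departures against the edge deficit,'' but an extremal graph has exactly $E_0$ edges, so there is no deficit to charge: Theorem~\ref{degseqs} shows that at equality the paper's convexity and 2-path counts narrow the degree sequence only to a one-parameter family (the parameter $z$, permitting $z$ vertices of degree $q-1$ and even one of degree $q-2$), and nothing in those counts selects $z=0$; eliminating $z>0$ requires genuinely new structural arguments, not a rerun of the stability estimates. Second, your construction of the attachment set $S$ presupposes the geometry you are trying to recover: absolute points, their collinearity for even order, and a distinguished vertex all of whose neighbours have degree $q$ are features read off from the known example, and your coordinate computation for collinearity is valid only in $\mathrm{PG}(2,q)$, whereas F\"uredi's theorem allows non-Desarguesian planes. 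Before $G^{+}$ is built there is no plane and no polarity, so ``every failure of the near-projective-plane axioms traceable to the single absent point'' is a hope, not an argument. The one lemma that actually carries the theorem --- that any extremal $G$ contains $q$ vertices of degree $q$ pairwise sharing no common neighbour (so that joining a new vertex to them creates no $C_4$), and that this choice is essentially unique --- is precisely what your ``axiom check'' and your own closing paragraph leave open; that lemma, not the appeal to F\"uredi, is the theorem.
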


The proof of this result is much more lengthy and complicated than that of the inequality in Theorem \ref{main}, and requires $q$ to be sufficiently large.  The purpose of this paper is to give a simpler proof of the inequality and show it holds for all even $q$.  We start with some notation.

We let $X_k$ be the set of vertices of degree $k$, $X_{\leq k}$ be the set of vertices of degree at most $k$, $E_0$ be $\frac 1 2 q(q+1)^2-q$, and $n$ be the number of vertices ($q^2+q$). We will use $\Gamma ( x )$ to represent the vertices in the neighborhood of $x$. For our various lemmas, we will specify in each case whether $q$ is an even number or simply a positive integer; however, in all cases we consider $q \geq 6$. (We know from \cite{CFS} and \cite{YR} that the inequality in Theorem \ref{main} is true for $q \leq 5$.)

In general, we proceed indirectly. We will show that no $C_4$-free graph with $E_0+1$ edges can exist, from which we conclude that a graph cannot have more than that number of edges (as it would contain an impossible subgraph).
We will use and generalize the techniques found in \cite{F1}, \cite{F2}, and \cite{F3}.  %In our first lemma, we bound the maximum degree of a graph with $E_0$ edges.  One of the main difficulties 

\begin{lemma}
Let $q$ be a natural number greater than 2 and let $G$ be a $C_4$-free graph on $q^2+q$ 
vertices with at least $E_0$ edges.  Then 
the 
maximum degree 
of a vertex 
in $G$ is at most $q+2$.
\end{lemma}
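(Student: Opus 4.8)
The plan is to argue by contradiction. Suppose some vertex $x$ has degree $D=|\Gamma(x)|\ge q+3$, and write $N=\Gamma(x)$ and $R=V(G)\setminus(\{x\}\cup N)$, so $|R|=n-1-D$. Three elementary consequences of $C_4$-freeness drive the argument. First, $N$ induces a matching, with $m$ edges say, since a vertex of $N$ with two neighbours $v,w$ in $N$ would give $v,w$ the two common neighbours $x$ and that vertex. Second, every vertex of $R$ has at most one neighbour in $N$, as two such neighbours would close a $C_4$ through $x$; so, setting $j_u=|\Gamma(u)\cap R|$ for $u\in N$, the sets $\Gamma(u)\cap R$ are pairwise disjoint and $\sum_{u\in N}j_u$ equals the number $e(N,R)$ of $N$--$R$ edges. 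Third, the same disjointness at $x$ gives $\sum_{u\in N}(d(u)-1)=2m+e(N,R)\le n-1$.

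The crux is to apply the common-neighbour count not globally but to the pairs lying inside $R$. Since $x$ is adjacent to no vertex of $R$, any common neighbour of a pair in $R$ lies in $R$ or in $N$; as each pair has at most one common neighbour, writing $d_R(w)$ for the degree of $w\in R$ within $R$,
\[
\sum_{w\in R}\binom{d_R(w)}{2}+\sum_{u\in N}\binom{j_u}{2}\ \le\ \binom{|R|}{2}.
\]
Putting $P=\sum_{w\in R}d_R(w)=2\,e(R)$ and $Q=\sum_{u\in N}j_u=e(N,R)$ and applying Cauchy--Schwarz to each sum (over $|R|$ and over $D$ terms, respectively) turns this into
\[
\frac{P^2}{|R|}-P+\frac{Q^2}{D}-Q\ \le\ |R|\,(|R|-1).
\]
This is the inequality that the cruder estimates miss.

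To close the argument I would feed in the edge hypothesis. From $e(G)=D+m+Q+e(R)\ge E_0$ we get $P=2\,e(R)\ge 2(E_0-D-m-Q)=:P_0$, and since $P_0$ is far larger than $|R|$ the map $P\mapsto P^2/|R|-P$ is increasing there, so the displayed inequality forces $P_0^2/|R|-P_0+Q^2/D-Q\le |R|(|R|-1)$. It remains to check that this fails. Using $m\le D/2$ and $Q=e(N,R)\le|R|$ (so the constraint $2m+Q\le n-1$ is saturated exactly at the extremal split $Q=|R|$, $m=D/2$, which one verifies is the worst case), a direct substitution of $n=q^2+q$ and $E_0=\tfrac12 q(q+1)^2-q$ shows the left side exceeds $|R|(|R|-1)$ by a positive amount of order $q^2$ as soon as $D\ge q+3$ — a contradiction — whence $D\le q+2$.

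The main obstacle is exactly that the naive tools are too weak: the global convexity bound $\sum_v\binom{d(v)}{2}\le\binom{n}{2}$ together with $e(G)\ge E_0$, and even its sharpening by the neighbourhood-disjointness bound at $x$, each permit a vertex of degree $q+3$, falling short by only a $\Theta(q)$ margin in the edge count. The decisive step is localizing the common-neighbour count to the pairs inside the non-neighbourhood $R$: because the low degree forced on $N$ pushes almost all edges into $R$, the internal pairs of $R$ over-saturate their common-neighbour capacity. The technical heart is then the bookkeeping of the last paragraph — confirming that the substituted inequality is violated for every admissible distribution of edges among $N$, the $N$--$R$ cut, and $R$, and that the contradiction only strengthens as $D$ grows beyond $q+3$.
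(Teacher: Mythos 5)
Your argument is correct, but it takes a genuinely different and somewhat heavier route than the paper. I checked the decisive computation: with $D=q+3$ one has $|R|=q^2-4$, the extremal split $m=D/2$, $Q=|R|$ gives $P_0=q^3-4q-1$, and your left-hand side exceeds $|R|(|R|-1)$ by exactly $q^2+3q-18+\tfrac{1}{q^2-4}+\tfrac{25}{q+3}$, which is positive for all $q\ge 3$ (note that at $q=3$ the polynomial part vanishes and positivity rests entirely on the fractional terms); moreover the partial derivatives of your bound in $m$ and in $Q$ are indeed negative throughout the admissible range, so this really is the worst case, and the gap grows with $D$. The paper instead fixes a maximum-degree vertex $u$, bounds the number of 2-paths with both endpoints outside $\Gamma(u)$ above by $\binom{n-d}{2}$ and below by $\sum_{v\neq u}\binom{d(v)-1}{2}$ (each $v\neq u$ has at most one neighbour in $\Gamma(u)$), applies Jensen once over all $n-1$ vertices, and finishes with a short algebraic factoring; it never needs to track how the edges distribute among $N$, the cut, and $R$, nor to optimize over that distribution. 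Your localization to pairs inside $R$, with separate Cauchy--Schwarz estimates for centres in $N$ and in $R$, is a legitimate refinement, but here it buys nothing: your stated premise that the cruder bound restricted to pairs outside $\Gamma(x)$ ``falls short by a $\Theta(q)$ margin'' is false --- that is precisely the paper's argument, and its final contradiction holds with a margin of order $q^2$. So your proof stands, but you should (a) actually write out the monotonicity checks in $m$, $Q$, and $D$ that you only assert, being careful at $q=3$ where the leading term of the excess vanishes, and (b) drop the claim that the simpler global argument is insufficient.
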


\begin{proof}

Let $u$ be a vertex of 
$G$ of maximum degree 
$d$.  Let $e$ be the number of edges 
of $G$, $e \geq \frac{1}{2}q(q+1)^2-q$.  We proceed by bounding the 
number of 2-paths in $G$ which have no endpoints in $\Gamma ( v )$.
This gives us:

$$\binom{n-d}{2} \geq \sum\limits_{v \neq u} \binom{d(v)-1}{2} $$

Using Jensen's inequality for the function $f(x) = \binom{x}{2}$ we have:

$$\sum\limits_{ v \neq u } \binom{d(v)-1}{2} 
 \geq (n-1)\binom{(2e-(n-1)-d)/(n-1)}{2}$$

Multiplying by $2 (n-1) (q+1)$ and simplifying yields:
\begin{equation}\label{inq}
(q+1)(n-1)(n-d)(n-d-1) \geq (q+1)(2e-n-d+1)(2e-2n-d+2)
\end{equation}

However, we also have:

$$
(q+1)(2e-2n-d+2)-(n-1)(n-d-1) \geq (q^2-2)d-q^3-2q^2+q+1 
$$ $$\geq (q^2-2)(q+3)-q^3-3q^2+1 \geq q^2-q-5$$

with $q^2-q-5>0$ for $q>2$, \\

which gives us:

\begin{equation}\label{inq1}
(q+1)(2e-2n-d+2) > (n-1)(n-d). 
\end{equation}

We also have the inequality:

$$
(2e-n-d+1) - (q+1)(n-d) \geq -q^2-3q+1+qd 
$$ $$\geq -q^2-3q+1+q(q+3) =1>0,$$

which demonstrates that:

\begin{equation}\label{inq2}
(2e-n-d+1) > (q+1)(n-d-1). 
\end{equation}

Therefore the product of (\ref{inq1}) and (\ref{inq2}) contradict (\ref{inq}), 
and 
the theorem 
follows.
\end{proof}

Since we now have an upper bound on the maximum degree of an extremal graph, we 
focus on 
the lower bound.

\begin{theorem} \label{degseqs}
Let $q$ be an even number and let $G$ be a $C_4$-free graph on $q^2+q$ vertices, 
with maximum degree $q+1$ 
or less.  
Then, if $e$ denotes the number of edges of $G$, we have $e \leq E_0$.  Furthermore, if equality holds, then the degree sequence of $G$ must be one of the following (where $z$ is a parameter):

\begin{center}
    \begin{tabular}{ |l | l | l | l |}
    \hline
    $|X_{q+1}|$ & $|X_q|$ &$|X_{q-1}|$ &$|X_{q-2}|$ \\ \hline
    $q^2-q+z$ & $2q-2z$ & $z$ & 0 \\ \hline 
    $q^2-q+z+1$ & $2q-2z-1$ & $z-1$ & 1 \\ \hline 
    \end{tabular}
\end{center}
\end{theorem}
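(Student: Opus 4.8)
The plan is to work with the \emph{deficiency} $\delta_v := (q+1)-d(v)\ge 0$ of each vertex and its total $S:=\sum_v \delta_v$. Since $2e=\sum_v d(v)=n(q+1)-S=q(q+1)^2-S$, we have $e=\tfrac12 q(q+1)^2-\tfrac12 S$, so the asserted inequality $e\le E_0$ is exactly the statement $S\ge 2q$, and equality $e=E_0$ is exactly $S=2q$. Everything therefore reduces to a lower bound on the total deficiency together with an analysis of the equality case. As a first, elementary input I would record the cherry bound: because $G$ is $C_4$-free, any two vertices have at most one common neighbor, so $\sum_v\binom{d(v)}{2}\le\binom{n}{2}$. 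Substituting $d(v)=q+1-\delta_v$ and simplifying turns this into $(q+1)S-\sum_v\binom{\delta_v+1}{2}\ge \tfrac12 q(q+1)$, and dropping the (nonnegative) quadratic term gives only $S\ge q/2$. This falls short of the target $2q$ by a constant factor, so the cherry bound alone is nowhere near enough; recovering the missing factor is the whole point.

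The extra leverage I would extract from a \emph{second-neighborhood} count. Fix a vertex $v$. Since $G$ is $C_4$-free, every vertex at distance $2$ from $v$ has a unique neighbor in $\Gamma(v)$, and the subgraph induced on $\Gamma(v)$ has maximum degree at most $1$ (a vertex of $\Gamma(v)$ with two neighbors inside $\Gamma(v)$ would close a $C_4$ through $v$); that is, $\Gamma(v)$ induces a matching. Writing $t(v)$ for the number of edges of this matching, counting the vertices within distance $2$ of $v$ yields $1+d(v)+\sum_{u\in\Gamma(v)}(d(u)-1)-2t(v)\le n$, i.e.\ $\sum_{u\in\Gamma(v)} d(u)\le n-1+2t(v)$. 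Here is the one place where the hypothesis that $q$ is even is decisive: since $\Gamma(v)$ has at most $q+1$ vertices, its matching has at most $\lfloor(q+1)/2\rfloor=q/2$ edges, so $t(v)\le q/2$. Rearranging the displayed inequality for a vertex $v$ of maximum degree $q+1$ gives $\sum_{u\in\Gamma(v)}\delta_u\ge (q+2)-2t(v)\ge 2$, so \emph{every} vertex of degree $q+1$ forces at least two units of deficiency into its own neighborhood.

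To globalize I would sum $\sum_{u\in\Gamma(v)}\delta_u\ge (q+2)-2t(v)$ over all $v\in X_{q+1}$ and read the left side as $\sum_u\delta_u\,|\{v\in X_{q+1}:v\sim u\}|\le (q+1)S$, a charging in which each unit of deficiency at $u$ pays for the max-degree neighbors of $u$. The crude form, using only $t(v)\le q/2$ and $|X_{q+1}|\ge n-S$, gives $(q+1)S\ge 2(n-S)$, i.e.\ $S\ge 2n/(q+3)=2q(q+1)/(q+3)$, which is just short of $2q$. \textbf{The main obstacle is closing this constant-factor gap.} The deficit sits in the triangle terms: a max-degree vertex $v$ with few triangles ($t(v)<q/2$) is forced to absorb proportionally more deficiency than the crude charge accounts for, and the term $\sum_{v\in X_{q+1}} t(v)$ is itself of order $q^3$, so it cannot be discarded cheaply. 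I would therefore carry the exact local surplus $(q+2)-2t(v)$ through the summation and control $\sum_{v\in X_{q+1}} t(v)$ by a separate double count of triangles (using that each edge lies in at most one triangle and that every neighborhood induces a matching), so that the triangle contributions cancel against the main term and leave precisely $S\ge 2q$.

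For the equality case $S=2q$, I would trace back which inequalities must be tight. Tightness of the charging penalizes any concentration of deficiency, since a vertex of deficiency $\delta$ has only $q+1-\delta$ neighbors and so pays less efficiently per unit; this rules out any vertex of degree at most $q-3$ and permits at most one vertex of degree $q-2$. Hence all degrees lie in $\{q-2,q-1,q,q+1\}$ with $|X_{q-2}|\in\{0,1\}$. The two bookkeeping identities $|X_{q+1}|+|X_q|+|X_{q-1}|+|X_{q-2}|=n$ and $|X_q|+2|X_{q-1}|+3|X_{q-2}|=S=2q$ then determine the remaining freedom: writing $z=|X_{q-1}|$ and taking $|X_{q-2}|=0$ produces the first row, while $|X_{q-2}|=1$ produces the second, which are exactly the two admissible degree sequences. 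I expect the delicate part of this last step, as in the bound itself, to be showing that no vertex of degree below $q-2$ can survive and that $|X_{q-2}|$ cannot exceed $1$, both of which should follow from pushing the surplus terms in the charging to their tight values.
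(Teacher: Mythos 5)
Your local inequality is exactly the paper's: for a vertex $v$ of degree $q+1$, counting vertices within distance $2$ and using that $\Gamma(v)$ induces a matching on an odd number $q+1$ of vertices gives $\sum_{u\in\Gamma(v)}\bigl((q+1)-d(u)\bigr)\ge (q+2)-2t(v)\ge 2$. The gap is in the globalization, and you have correctly located it but proposed a repair that cannot work. The surplus $(q+2)-2t(v)$ equals exactly $2$ for essentially every degree-$(q+1)$ vertex of the extremal examples (in the orthogonal polarity graph with $q$ even, a typical line carries exactly one absolute point, so $\Gamma(v)$ induces a matching of size exactly $q/2$); hence $\sum_{v\in X_{q+1}}t(v)$ really is $\tfrac q2|X_{q+1}|-O(q)$ and there is no extra slack to harvest from a triangle count. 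The missing factor has to come from the \emph{other} side of the charging, i.e., from how many degree-$(q+1)$ neighbors a deficient vertex can serve. The paper does this by splitting $X_{q+1}$ into the set $A$ of vertices whose forced deficiency is carried by at least two neighbors of degree exactly $q$ (each such neighbor absorbs at most $q$ charges) and the set $B$ of vertices with a neighbor of degree at most $q-1$ (each such neighbor absorbs at most $q-1$ charges \emph{and} carries deficiency at least $2$, which simultaneously weakens the degree-sum bound $2e\le(q+1)|X_{q+1}|+q|X_q|+(q-1)|X_{\le q-1}|$); combining the two counts with the parity of $2e$ (here $q$ even is used a second time) closes the gap. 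Your own observation that a vertex of deficiency $\delta$ has only $q+1-\delta$ neighbors would in fact also close it -- replacing $(q+1)S$ by $\sum_u\delta_u(q+1-\delta_u)\le qS$ improves $(q+3)S\ge 2n$ to $(q+2)S\ge 2n$, i.e.\ $S>2q-2$, whence $e\le E_0$ by integrality -- but you deploy that idea only in the equality discussion, not where it is needed.

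The equality analysis is a second genuine gap. Even if $S\ge 2q$ were established, your global inequality evaluated at $S=2q$ reads roughly $2q^2\ge 2q^2-2q$ and so has slack of order $q$: no step of your chain is forced to be tight, and ``tightness of the charging'' gives you nothing. Concretely, your argument does not exclude two vertices of degree $q-2$, nor (for $q\ge 10$) a single vertex of degree $q-3$, when $S=2q$; a short computation with $\sum_u\delta_u^2$ and the number of deficient vertices shows such configurations satisfy every inequality you write down. The paper obtains the degree-sequence dichotomy because its two estimates sandwich $2e$ between $q^3+2q^2-q$ and $q^3+2q^2-q+1$, a window of width one, which forces $\sum_{v\in X_{\le q-1}}\bigl((q-1)-d(v)\bigr)\le 1$ and hence at most one vertex of degree $q-2$ and none lower. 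To salvage your route you would need an inequality that, at $S=2q$, is tight to within a single unit; the deficiency charging as you have set it up is not.
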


\begin{proof}

If the maximum degree of $G$ is $q$ or less, we have $e \leq 
\frac{1}{2}q(q^2+q) \leq E_0$ and the 
theorem is immediate.  
  Therefore, we take $G$ to have maximum degree $q+1$.
  It is clear that:
$$\xs+\ys+|\zss|=q^2+q
$$

Noting that the degree sum of all the vertices of a graph is equal to $2e$, we 
have:

\begin{equation}\label{edge}
2e \leq (q+1)\xs+q\ys+(q-1)|\zss|=q(q^2+q) +\xs-|\zss|
\end{equation}

Let $v$ be a vertex of degree $q+1$.  We wish to bound the average degree of 
$\Gamma(v)$.
We will do this by noting that the collection $C(v)$ of vertices distance 2 
or less from 
$v$ must naturally be less than $q^2+q$.  We then show that $|\bigcup_{w \in 
\Gamma(v)} 
\Gamma(w)|+1 \leq |C(v)|$.

Each vertex in $\Gamma(v)$ is connected to at most one other vertex in 
$\Gamma(v)$, as 
otherwise it would imply a $C_4$ is in $G$.  Also, since $q+1$ is odd, there 
must be a 
vertex in $\Gamma(v)$ which is connected to no other vertex in $\Gamma(v)$.  
Rephrasing, 
this means that there is at least one vertex in $C(v)$ that is not in 
$\bigcup_{w \in 
\Gamma(v)} \Gamma(w)$.  We also know that for $w,u \in \Gamma(v)$ we have 
$\Gamma(w) 
\cup 
\Gamma(u) = \{v\}$, otherwise it again implies $G$ has a $C_4$.  We then have: 

$$
\left | \bigcup_{w \in \Gamma(v)} \Gamma(w) \right |+1= \left (\sum_{w \in \Gamma(v)} 
d(w)\right)-q+1 
\leq |C(v)| \leq q^2+q
$$

Then we have:

$$\frac{  \sum_{w \in \Gamma(v)} d(w)}{q+1} \leq 
\frac{q^2+2q-1}{q+1}=q+1-\frac{2}{q+1}
$$
Therefore, we can conclude that if a vertex of degree $q+1$ is connected to no 
vertex of degree $q-1$, 
then it must be connected to at least two vertices of degree $q$.  Let $A$ be the 
set of 
vertices of degree $q+1$ which are connected to at least two vertices of $\yss$ but no 
vertex of 
$X_{\leq q-1}$, 
and $B$ be the set of vertices of $\xss$ connected to at least one vertex of 
$X_{\leq q-1}$.
Let $a=|A|$ and  $b=|B|$.  Naturally $a+b=\xss$.

We consider the number of edges $e'$ with one endpoint in $A$ and the 
other in $\yss$.  
As 
each vertex of $A$ is connected to at least two vertices of $\yss$, and each 
vertex of 
$\yss$ 
is connected to at most $q$ vertices of $\xss$, we have:
  
\begin{equation}\label{ineq1}
2a \leq e' \leq q\ys
\end{equation}

Let $e''$ be the number of edges with one endpoint in $B$ and the other in 
$\zss$.
As each vertex of $B$ is connected to at least one vertex of $\zss$ and each 
vertex in 
$\zss$ 
is connected to at most $q-1$ vertices in $B$, we have:

\begin{equation}\label{ineq2}
b \leq e'' \leq (q-1)|\zss|
\end{equation}

Adding twice (\ref{ineq2}) to (\ref{ineq1}) we get:

$$2\xs \leq q\ys+(2q-2)|\zss|
$$
Adding $q\xs$ to both sides we have:

$$(q+2)\xs \leq q(\xs+\ys+|\zss|)+(q-2)|\zss|=q^3+q^2+(q-2)|\zss|
$$

Dividing both sides by (q+2) and expanding, we obtain:

$$\xs \leq q^2-q+|\zss|-\frac{4|\zss|}{q+2}-\frac{4}{q+2}+2
$$

This implies

 $$\xs-|\zss| \leq q^2-q + 1$$

Using this with (\ref{edge}) we have:

$$2e \leq q^3+q^2 +q^2-q +1
$$
Since $q$ and $2e$ are even, we must have:

$$e \leq \frac{1}{2}q(q+1)^2-q$$

If equality holds, then we have $ q^2-q \leq \xs-|\zss| \leq q^2-q+1 $ and $2e \leq (q+1)\xs + q\ys + (q-1)|\zss| \leq 2e+1$.  This implies that 
at most one of the vertices in $\zss$ has degree $q-2$, and the rest have degree $q-1$. The remainder of the theorem follows. \end{proof}

It is now clear that any graph with more than $E_0$ edges must have maximum degree equal to $q+2$.  The rest of the paper is devoted to showing that no such graph exists.  This is done by utilizing a connection between vertices of degree $q+2$ and vertices of relatively small degree.

\begin{lemma} \label{doubleconnect}
If there is a $C_4$-free graph on $n$ vertices and $E_0+1$ edges (with $q \in \mathbb{N}$), then any vertex of degree $\delta \leq \frac q 2 + 1$ connects to every vertex of degree $q+2$.
\end{lemma}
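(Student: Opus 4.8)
The plan is to argue by contradiction. Suppose $G$ is such a graph, $y$ is a vertex with $d(y)=q+2$, $x$ is a vertex with $d(x)=\delta\le\frac q2+1$, and $x\notin\Gamma(y)$. Since $G$ has $E_0+1>E_0$ edges, the maximum-degree bound together with Theorem \ref{degseqs} already forces the maximum degree to be exactly $q+2$, so $y$ really exists and the graph is as dense as possible. First I would record the local consequences of $C_4$-freeness that I expect to drive everything: $x$ and $y$ share at most one common neighbor; the edges between $\Gamma(x)$ and $\Gamma(y)$ form a matching, since a vertex on either side with two neighbors on the other would close a $C_4$; and the sets $\Gamma(w)\setminus\{y\}$ for $w\in\Gamma(y)$ are pairwise disjoint, as two neighbors of $y$ can have no second common neighbor. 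The subgraph induced on $\Gamma(y)$ is likewise a matching, of size at most $\frac{q+2}{2}=\frac q2+1$.

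Next I would run a second-neighborhood count at $y$. Writing $m$ for the number of edges inside $\Gamma(y)$, the disjointness above shows that the number of vertices within distance two of $y$ equals $1+\sum_{w\in\Gamma(y)}d(w)-2m$, and this is at most $n=q^2+q$. Passing to the deficiency $c(v):=(q+1)-d(v)$, whose total $\sum_v c(v)=n(q+1)-2e=2q-2$ is pinned down by the edge count, the inequality becomes $\sum_{w\in\Gamma(y)}c(w)\ge 2q+3-2m\ge q+1$ once $2m\le q+2$ is used. In other words, the neighbors of a maximum-degree vertex are compelled to be of below-average degree, and they already absorb most of the global deficiency budget $2q-2$. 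The bound $\delta\le\frac q2+1$ enters precisely here: it gives $c(x)=(q+1)-\delta\ge\frac q2$, so a non-adjacent $x$ contributes more than a quarter of the entire budget from \emph{outside} $\Gamma(y)$.

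The heart of the argument is to convert ``too much deficiency concentrated away from where the edges must sit'' into a hard contradiction, and this is the step I expect to be the main obstacle. The naive tally does not overflow by itself: the matching term $2m$ and, more seriously, additional vertices of degree $q+2$ (each carrying charge $-1$, and possibly lying inside $\Gamma(y)$) can soak up the surplus, so merely adding $c(x)\ge\frac q2$ to the $q+1$ forced inside $\Gamma(y)$ leaves the budget $2q-2$ uncontradicted for large $q$. To close the gap I would sharpen the count in two directions. First, I would argue that the induced matching cannot reach its maximum $\frac q2+1$ without forcing still more low-degree neighbors of $y$, tightening the bound on $\sum_{w\in\Gamma(y)}c(w)$. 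Second, I would route the contradiction through the distance-two structure rather than a global charge sum, splitting into the case where $x$ and $y$ have a (necessarily unique) common neighbor, so $x$ lies at distance exactly two and its lone edge into $\Gamma(y)$ is pinned down, and the case where $x$ is farther, in which the pairwise-disjoint sets $\Gamma(w)\setminus\{y\}$ together with $\{x\}\cup\Gamma(x)$ are made to overrun the $q^2+q$ available vertices. Reconciling the near-extremal edge total with the wasteful placement of the low-degree vertex $x$ is the crux; everything before it is bookkeeping forced by $C_4$-freeness.
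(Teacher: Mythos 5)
Your proposal is a plan with an acknowledged hole rather than a proof, and the hole is exactly where you say it is. Your bookkeeping up to that point is correct: with $e=E_0+1$ the total deficiency is $\sum_v\bigl((q+1)-d(v)\bigr)=2q-2$, and the distance-two count at $y$ forces $\sum_{w\in\Gamma(y)}c(w)\ge 2q+3-2m\ge q+1$. But adding $c(x)\ge\frac q2$ only gets you to $\frac{3q}{2}+1$, which sits comfortably below the budget $2q-2$ for every $q\ge 6$, and the possible negative charges from further degree-$(q+2)$ vertices make matters worse. Neither of your two proposed repairs closes this gap of roughly $q/2$: if $x$ is at distance at least $3$ from $y$, only $x$ itself (not $\Gamma(x)$, whose members may legitimately lie at distance two from $y$) can be added to the distance-two count, improving the bound by $1$; and when $q$ is even, $q+2$ is even too, so the induced matching on $\Gamma(y)$ can be perfect without any parity obstruction forcing extra low-degree neighbors. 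So the argument as sketched does not terminate in a contradiction.

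The paper closes the lemma by a quite different and essentially one-line trick that you may want to internalize: delete all $\delta$ edges at the low-degree vertex $v$ and add the single edge $uv$. The new graph is still $C_4$-free (since $v$ now has degree $1$), has $E_0+1-\delta$ edges, and has a vertex $u$ of degree $q+3$. One then applies the standard F\"uredi $2$-path inequality $\binom{n-d(u)}{2}\ge\sum_{x\ne u}\binom{d(x)-1}{2}$ together with Jensen; the point is that the loss of at most $\frac q2+1$ edges is more than paid for by pushing $d(u)$ up to $q+3$, and the resulting polynomial inequality fails for all relevant $q$. The lesson is that the contradiction is extracted not from the static neighborhood of $y$ but from a surgery that concentrates the anomaly into a single over-large degree, where the global $2$-path count is already tight. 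If you want to salvage your local approach you would need a substantially sharper structural input than the matching bound; as written, the crux step is missing.
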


\begin{proof} Assume for the sake of contradiction that this statement is not true. Then there is a vertex $v$ of degree $\delta$ (where $\delta \leq \frac q 2 +1$) and a vertex $u$ of degree $q+2$ such that $u \nsim v$. Then we remove all $\delta$ edges in which $v$ is incident and add a new edge from $u$ to $v$, so that our graph now has $e = E_0+1-\delta$ edges, a vertex $v$ of degree 1, and a vertex $u$ of degree $q+3$.

Now we extend the lemma used in \cite{F1} concerning 2-paths containing no endpoints in $\Gamma(u)$, the neighborhood of $u$. We know that we can bound the number of such 2-paths above with ${{n-d(u)} \choose 2}$, as there can be at most one 2-path between any pair of points not in $\Gamma(u)$. We also know that each vertex that is not $u$ has at most one neighbor in $\Gamma(u)$, which means that this inequality must hold: $${{n -d(u)} \choose 2} \geq \sum_{x \neq u} {{d(x) - 1} \choose 2}$$ as the right side of that inequality is a lower bound on the number of actual 2-paths in the graph. 

To actually calculate this, we first note that every 2-path involving $v$ must have an endpoint in $\Gamma(u)$, which means that we can write $$\sum_{x \neq u} {{d(x) - 1} \choose 2} = \sum_{x \neq u,v} {{d(x)-1} \choose 2}$$ If we consider the total sum being chosen from (i.e. $\sum_{x \neq u,v} d(x) -1$) we get the number $$2e - (n-2)-(q+3)-1$$ (since the total degree sum is $2e$ and we subtract first the degrees of the two uncounted vertices and then $1$ from the remaining $n-2$ terms). We can thus use Jensen's inequality to obtain this expression: $${n-(q+3) \choose 2} \geq (n-2) {\frac{2e-(n-2)-(q+3)-1}{n-2} \choose 2}$$

Now we take $$e = E_0 - \frac q 2 + 1$$ (which corresponds to $\delta = \frac q 2 + 1$). Since the left side is not dependent on $e$ and the right side is, if the inequality fails for our chosen $e$ then it will certainly fail for larger values $e$, which is equivalent to smaller values of $\delta$. When we expand and simplify the above inequality, we find it is equivalent to the following: $$- \frac{2q^3-2q^2-10q+12}{(q^2+q-2)} \geq 0 $$ which is not true for any relevant $q$. \end{proof}

\begin{corollary} \label{dcc}
For any $q$, if $|X_{q+2}| \geq 2$ in a $C_4$-free graph with $E_0 + 1$ edges on $n$ vertices, there can be only one vertex $v$ of degree $\frac q 2 + 1$ or less. In that case, $|X_{q+2}| \leq d(v).$
\end{corollary}

\begin{proof} The first part follows from the prior lemma and the fact that the graph is $C_4$-free; the second part follows from the lemma and the first part of the corollary. \end{proof}

\begin{lemma} \label{total2plemma}
The maximum number of 2-paths in a graph with $n$ vertices and $E_0+1$ edges (with $q$ even) is $qe - \xs + \frac 1 2 \ws$.
\end{lemma}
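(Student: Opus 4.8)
The plan is to count $2$-paths by their central vertex: a $2$-path is a vertex together with an unordered pair of its neighbours, so the number of $2$-paths is exactly $\sum_v \binom{d(v)}{2}$ (and, since the graph is $C_4$-free, this also equals the number of pairs of vertices joined by a $2$-path, no pair being joined by two). I would first peel off the main term. Writing $\binom{d}{2} = \frac{q}{2}d + \frac{1}{2}d(d-q-1)$ and summing, the identity $\sum_v d(v) = 2e$ converts the count into
\[
\sum_v \binom{d(v)}{2} = qe + \frac{1}{2}\sum_v d(v)\bigl(d(v)-q-1\bigr),
\]
so the lemma reduces to bounding the correction $\frac{1}{2}\sum_v d(v)(d(v)-q-1)$ above by $-|X_{q+1}|+\frac{1}{2}|X_{q+2}|$.

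Next I would evaluate this correction degree class by degree class, using that the maximum degree is at most $q+2$. A vertex of degree $q+2$ contributes $+\frac{q+2}{2}$, a vertex of degree $q+1$ contributes $0$, and a vertex of degree $d\le q$ contributes $-\frac{1}{2}d(q+1-d)\le 0$. Hence it suffices to show that the deficiency of the low-degree vertices dominates:
\[
\tfrac{1}{2}(q+1)|X_{q+2}| + |X_{q+1}| \le \tfrac{1}{2}\sum_{d\le q} d(q+1-d)\,|X_d|.
\]
The ingredients I expect to drive the right-hand side are the degree-count identity $\sum_v (q+1-d(v)) = (q+1)n - 2e = 2q-2$, which pins the total deficiency $\sum_{d\le q}(q+1-d)|X_d| = 2q-2+|X_{q+2}|$, and the observation that, per unit of deficiency, weight placed at degree $q$ is most efficient (the factor $q+1-d$ is largest there), so the right-hand side is largest, and the inequality hardest, when all the deficiency sits at degree $q$. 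Substituting $|X_{q+1}| = n - |X_{q+2}| - \sum_{d\le q}|X_d|$ then reduces the whole question to a relation among $|X_{q+2}|$ and the counts below $q+1$.

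The main obstacle is that this last inequality is \emph{not} a formal consequence of the vertex and edge counts alone: an abstract sequence with a single vertex of degree $q+2$ whose deficiency is spread to low degrees can violate it by a small margin. This is precisely where $C_4$-free realisability must enter, so I would restrict attention to degree sequences that can actually occur. I would invoke Corollary \ref{dcc} together with Lemma \ref{doubleconnect} to bound both the number and the smallness of the low-degree vertices — in particular splitting on whether $|X_{q+2}|\ge 2$, the case in which at most one vertex can have degree $\le \frac{q}{2}+1$ — thereby forcing the deficiency to concentrate near degree $q$. Finally, the parity hypothesis ($q$ even, so that $q+1$ is odd while $q$ and $q+2$ are even) is what yields the clean half-integer coefficient on $|X_{q+2}|$ and lets me round the correction down to the stated value. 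Pinning the low-degree contribution tightly enough to land exactly on $-|X_{q+1}|+\frac{1}{2}|X_{q+2}|$, rather than a weaker estimate, is the delicate step.
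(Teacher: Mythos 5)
Your opening identity is correct---in a $C_4$-free graph the number of $2$-paths is exactly $\sum_v \binom{d(v)}{2}$---but it reduces the lemma to the degree-sequence inequality $\sum_v\binom{d(v)}{2}\le qe-|X_{q+1}|+\tfrac12|X_{q+2}|$, and that inequality is \emph{false} for precisely the degree sequences at issue. Take $q=6$, $n=42$, $e=E_0+1=142$, with one vertex of degree $8$, thirty of degree $7$ and eleven of degree $6$: then $\sum_v\binom{d(v)}{2}=823$ while $qe-|X_{q+1}|+\tfrac12|X_{q+2}|=822.5$. (In general, with $|X_{q+2}|=1$ and all the deficiency sitting at degree $q$, the cherry count exceeds the claimed bound by $\tfrac12$, and pushing deficiency to lower degrees only widens the gap; also note that concentrating deficiency at degree $q$ is the \emph{easiest} case for your inequality, not the hardest, and it still fails there.) This is no accident: the whole point of the lemma is that its bound is later played off against the exact count $\sum_v\binom{d(v)}{2}$ in the proof of the main theorem to force the contradictions $z\le-\tfrac14$, etc. So the inequality you propose to verify is essentially the statement that no such graph exists; a proof of the lemma along these lines would have to establish the main theorem first. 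Your suggested rescue via Lemma \ref{doubleconnect} and Corollary \ref{dcc} cannot close the gap, since the failure above occurs with every low-degree vertex of degree exactly $q$ and $|X_{q+2}|=1$, a configuration about which those results say nothing.

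The paper's proof runs in the opposite direction: it bounds the number of $2$-paths \emph{above} by the number of unordered pairs of vertices joined by a $2$-path, namely $\binom{n}{2}-\frac12\sum_v f(v)$, where $f(v)$ counts the vertices not reachable from $v$ by a $2$-path, and then bounds $\sum_v f(v)$ from \emph{below} using second-neighborhood information that is invisible in the degree sequence. Concretely, since the sets $\Gamma(w)\setminus\{v\}$ for $w\in\Gamma(v)$ are pairwise disjoint in a $C_4$-free graph, $f(v)=n-1-\sum_{w\sim v}(d(w)-1)\ge q(q+1-d(v))-1-k$ when $v$ has $k$ neighbours of degree $q+2$; and---the step your sketch gestures at with the parity remark but never actually deploys---a vertex of degree $q+1$ has an odd neighbourhood, so some neighbour is matched to no other neighbour and is therefore unreachable by a $2$-path, giving $f(v)\ge 1$ there. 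Summing these local bounds over all vertices yields exactly $qe-|X_{q+1}|+\tfrac12|X_{q+2}|$. That local, neighbourhood-level input is the missing idea; without it (or an equivalent) your argument cannot be completed.
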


\begin{proof}
Since there can only be one 2-path between any two vertices (if there are more, there would be a $C_4$ in the graph), we can bound the number of 2-paths by ${n \choose 2}$. However, this may be improved by bounding the number of pairs of vertices which are not the endpoints of a 2-path. We consider how many other vertices cannot be reached in two steps from a given vertex, a function we will denote by $f(v)$.  The exact number of 2-paths in the graph is ${n \choose 2} - \frac{1}{2} \sum\limits_v f(v)$, so any lower bound on $\sum\limits_v f(v)$ will in turn yield an upper bound on the number of 2-paths in the graph.
%The way we calculate $f$ for an arbitrary vertex $v$ is to subtract from $q^2 +q$ the number of vertices that are neighbors of $v$'s neighbors, then subtract one (for $v$). \\

To bound $\sum\limits_v f(v)$, we can compute what $f(v)$ would be if $v$ is connected only to vertices of degree $q+1$, giving us a function we call $g(v)$.  This leads us to this table:

%In making these calculations, we bound $f(v)$ that a vertex is connected entirely to vertices of degree $q+1$, which leads us to this table:

\begin{center}
    \begin{tabular}{ | l | l |}
    \hline
    $d(v)$ & $g(v)$\\ \hline
    $q-2$ & $3q-1$\\ \hline 
    $q-1$ & $2q-1$ \\ \hline
    $q$ & $q-1$\\ \hline
    $q+1$ & $1$\\ \hline
    $q+2$ & $0$\\ \hline
    \end{tabular}
\end{center}

As a sample calculation, if $d(v) = q$ then $g(v) = q^2 + q - (1 + q \cdot q) = q-1$ because it has $q$ neighbors that each have $q$ neighbors other than $v$. The $1$ corresponding to degree $q+1$ comes from the fact that a vertex of degree $q+1$ must have one neighbor it cannot be connected to in a 2-path since (by assumption) $q+1$ is odd.\\

Strictly speaking, the values of $g(v)$ are not lower bounds on $f(v)$, since there are also vertices of degree $q+2$.  In general, if $v$ is adjacent to $k$ vertices of degree $q+2$, we have $f(v) \geq g(v)-k$.  Then subtracting $(q+2)|X_{q+2}|$ from $\sum\limits_v g(v)$ gives us the bound  
$\sum\limits_v f(v) \geq \sum\limits_v g(v) - (q+2)|X_{q+2}|$.
%To find the total number of prohibited 2-paths (call it $T$), we should write it as $$T = \frac 1 2 (\sum_{v \in V(G)} f(v) - \ws(q+2))$$ which incorporates the fact that vertices connected to a vertex of degree $q+2$ will be connected to one new vertex by a 2-path. 
We note that, for $d(v) \leq q$, $g(v) = q( q+1 - d(v)) -1$; for $d(v) = q+1$, we must add 2 to that formula, while for $d(v) = q+2$ we must add $q+1$. 
%Call the function obtained after making those two adjustments $g$, which means $T = \sum_{v \in V(g)} g(v) $.\\

This allows us to establish an upper bound on the number of 2-paths in the graph as follows: \begin{align*} {n \choose 2} - \frac 1 2 \sum_{v \in V(G)} f(v) & \leq {q^2 + q \choose 2} - \frac 1 2 \sum_{v \in V(G)} (g(v) - |X_{q+2}|) \\ &= \frac 1 2 [ (q^2+q)(q^2+q-1) - (\sum_{v \in V(G)} (q( q+1 - d(v)) -1 ) + 2\xs - \ws)] \\ &= \frac 1 2 [(q^2+q)(q^2+q-1) - |V(G)|(q^2+q-1) -2\xs + \ws + q\sum_{v \in V(G)} d(v) ]\\ &= qe - \xs + \frac 1 2 \ws  \end{align*} and so we have our result. \end{proof}

\begin{lemma}
For any $C_4$-free graph $G$ on $n$ vertices with $E_0+1$ edges ($q$ even), $\delta(G) > \frac q 2 +1$.
\end{lemma}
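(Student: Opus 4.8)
The plan is to argue by contradiction: suppose $G$ has a vertex $v$ with $d(v)=\delta\le \frac q2+1$. First I would pin down that $\ws\ge 1$. By Theorem \ref{degseqs} a $C_4$-free graph on $q^2+q$ vertices of maximum degree at most $q+1$ has at most $E_0$ edges, so our graph (with $E_0+1$ edges) must have a vertex of degree exceeding $q+1$, while the maximum-degree lemma caps every degree at $q+2$; hence $\ws\ge 1$, and by Lemma \ref{doubleconnect} each degree-$(q+2)$ vertex is adjacent to $v$. The contradiction will come from comparing the exact number of $2$-paths against the upper bound of Lemma \ref{total2plemma}.

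Next I would record that, since each $2$-path has a unique centre, the exact count is $\sum_w \binom{d(w)}{2}$. Writing $s_w=d(w)-(q+1)$, the identity $\sum_w s_w = 2e-(q+1)(q^2+q)=-(2q-2)$ together with $\binom{d}{2}=\binom{q+1}{2}+\tfrac12 s(2q+1+s)$ turns the bound $\sum_w\binom{d(w)}{2}\le qe-\xs+\tfrac12\ws$ of Lemma \ref{total2plemma} into a clean statement about $\sum_w s_w^2$. Carrying out the cancellations exactly yields
\[ \sum_w s_w^2 \;\le\; 2q^2-2-2\,\xs+\ws . \]
Reading off per-degree contributions, $\sum_w s_w^2=\ws+\sum_{j\ge 1}j^2\,|X_{q+1-j}|$, and eliminating $\xs$ by means of $\ws+\xs+\ys+\sum_{j\ge 2}|X_{q+1-j}|=q^2+q$, I would reduce this to the inequality
\[ \ys+2\,\ws \;\ge\; 2q+2+\sum_{j\ge 2}(j^2-2)\,|X_{q+1-j}| , \]
which I call $(\dagger)$. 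Alongside $(\dagger)$ I would retain the degree-sum identity, which in the same notation reads $\ys+\sum_{j\ge 2}j\,|X_{q+1-j}|=\ws+2q-2$.

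Then I would inject the low-degree vertex. Put $j_0=q+1-\delta\ge \frac q2$; since $q\ge 6$ this gives $j_0\ge 3$, so $v$ genuinely contributes $j_0^2-2$ to the sum in $(\dagger)$ and $j_0$ to the sum in the degree-sum identity (both distinct from the $\ys$ terms). Discarding the remaining nonnegative low-degree terms in each, the two relations combine to $2q+j_0^2-2\,\ws \le \ys \le \ws+2q-2-j_0$, that is
\[ j_0^2+j_0+2 \;\le\; 3\,\ws . \]
If $\ws=1$ this is already impossible for $j_0\ge 1$. If $\ws\ge 2$, then $v$ is the unique vertex of degree at most $\frac q2+1$ and Corollary \ref{dcc} supplies $\ws\le \delta=q+1-j_0$; substituting gives $j_0^2+4j_0-1\le 3q$, which with $j_0\ge \frac q2$ forces $q^2\le 4q+4$ and hence $q\le 4$. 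Since $q\ge 6$, both cases are contradictory, so no such $v$ exists and $\delta(G)>\frac q2+1$.

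The step I expect to be most delicate is the final numerical squeeze, because the slack in $j_0^2+j_0+2\le 3\,\ws$ is small: for even $q$ near the bottom of the range (notably $q=6$) the argument closes only because Corollary \ref{dcc} gives the sharp bound $\ws\le \delta$ rather than the weaker $\ws\le \frac q2+1$, and because the additive constants are tracked without loss. I would therefore be careful that every simplification leading to $(\dagger)$ is an exact identity and that the elimination of $\xs$ introduces no rounding, since even a single unit of slack would fail to rule out $q=6$.
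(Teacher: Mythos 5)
Your proof is correct; I verified the moment identities and the final numerical estimates and everything closes, including at $q=6$. The skeleton is the same as the paper's: argue by contradiction, use Lemma \ref{doubleconnect} and Corollary \ref{dcc} to control the interaction between the low-degree vertex and $X_{q+2}$, and compare the exact $2$-path count $\sum_w\binom{d(w)}{2}$ against the upper bound $qe-\xs+\tfrac12\ws$ of Lemma \ref{total2plemma}. Where you genuinely diverge is in how that single inequality is exploited. The paper fixes $\delta$ and $\ws$, smooths the remaining degree sequence by convexity so that every other vertex has degree $q$ or $q+1$, solves a linear system for $\xs$ and $\ys$, and ends with a downward-opening quadratic in $\delta$ whose roots (via the quadratic formula) force $\ws\geq 3+q-\sqrt{5+3q}>\tfrac q2+1$, contradicting $\ws\leq\delta$. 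You instead rewrite the same inequality exactly in terms of the first and second moments of $s_w=d(w)-(q+1)$, keep only the one term contributed by $v$ itself, and arrive at $j_0^2+j_0+2\leq 3\ws$ with $j_0=q+1-\delta\geq q/2$, which you dispose of by the case split $\ws=1$ versus $\ws\geq 2$. Your version buys a cleaner endgame---no smoothing step, no quadratic formula, and the $\ws=1$ case needs neither Corollary \ref{dcc} nor the uniqueness of $v$---at the cost of slightly heavier bookkeeping in deriving $(\dagger)$; your concern about slack at the bottom of the range is reasonable but unfounded, since for $q=6$ the $\ws\geq2$ case gives $j_0^2+4j_0-1\geq 20>18=3q$. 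One small point worth making explicit in a final write-up: the inequality $j_0^2+j_0+2\leq 3\ws$ presupposes $\ws\geq 1$, which you do justify (via Theorem \ref{degseqs} and the maximum-degree lemma) before invoking it.
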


\begin{proof}
Assume for the sake of contradiction that there is a vertex $v$ such that $d(v) = \delta \leq \frac q 2 +1$. We know from Lemma \ref{doubleconnect} and Corollary \ref{dcc} that $v$ is unique and $v$ is connected to every vertex in $X_{q+2}$. We also know from the Lemma \ref{total2plemma} that this inequality must hold: \begin{equation} \label{2path} qe - \xs + \frac 1 2 \ws \geq \ws {{q+2} \choose 2} + \sum_{x \neq v, \not \in X_{q+2}} {{d(x)} \choose 2} + {{\delta} \choose 2} \end{equation} since the right hand side is the total number of 2-paths in the graph.

Take $A$ to be the set of vertices that are neither $v$ nor in $X_{q+2}$. We wish to find the average degree of $A$, which we will denote by $c$. Since the maximum degree of any vertex in $A$ is $q+1$, $c \leq q+1$. Moreover, $c$ will be minimized when the $\ws = \delta = \frac q 2 +1$; in that case, we can calculate $c$: $$ c = \frac{2(E_0+1) - (\frac q 2 +1) - (\frac q 2 +1)(q+2)} {n - (\frac q 2 + 1 + 1)} $$ which yields $c = q+1 - \frac{4q-2}{2q^2+q-4}$. Since the subtracted term is less than 1 for all $q$, $q < c \leq q+1$. 

Now, clearly, for a fixed $\ws$ the left side of \ref{2path} is maximized when $\xs$ is minimized. We also wish to minimize the term $M = \sum_{x \in A} {{d(x)} \choose 2}$. If there is a vertex $y$ of degree $q - k$ (for some integer $k \geq 1$), then we know that we keep the same degree sum in $A$ (which is fixed, since $\delta$ and $\ws$ are fixed) if we were to take a vertex of degree $q+1$, turn it into a vertex of degree $q$, and raise the degree of $y$ by 1. Moreover, this will actually decrease $M$, because of the following arithmetic: \begin{align*} {q \choose 2} - {{q+1} \choose 2} + {{q - k +1} \choose 2} - {{q-k} \choose 2} &= -2q + 2(q-k) \\ &= -2k < 0 \end{align*} and so if there is a vertex of degree $q-1$ or less in $A$ then $M$ is not minimal.

Thus we see that both $M$ and $\xs$ are minimized when every vertex in $A$ has degree $q$ or degree $q+1$. Thus, if \ref{2path} does not hold in that case, it cannot hold in any case. To obtain values for $\xs$ and $\ys$, we solve this system of equations: \begin{align*} \ws + \xs + \ys + 1 &= n \\ \ws(q+2) + \xs(q+1) + \ys q + \delta &=  2(E_0+1) \end{align*} which makes $\xs = q^2 + 2 - \delta - 2\ws$ and $\ys = q -3 +\ws +\delta$.

When we plug those values into \ref{2path} and group terms in terms of $\delta$ we obtain this expression: $$-\frac 1 2 \delta^2 +(q+\frac 3 2)\delta + \frac 3 2 \ws - \frac 3 2 q - \frac 1 2 q^2 - 2 \geq 0$$ Since, viewed as a function of $\delta$, that is a downward-opening quadratic, we know that the inequality will only be true between the zeros of that function. Applying the quadratic formula to the expression yields this equivalent expression: $$\frac 3 2 + q + \frac 1 2 \sqrt{-7 + 12\ws} \geq \delta \geq \frac 3 2 + q - \frac 1 2 \sqrt{-7 + 12\ws}$$ and since we know that $\delta \geq \ws$, we know this must be true: \begin{align*} \frac 3 2 + q + \frac 1 2 \sqrt{-7 + 12\ws} &\geq \ws \\ \frac 3 2 + q + \frac 1 2 \sqrt{-7 + 12\ws} -\ws &\geq 0 \end{align*}

Solving that inequality for $\ws$ yields a requirement that $$\ws \geq 3+q - \sqrt{5+3q}$$ but that (given our constraint that $q$ be at least 6) implies $\ws > \frac q 2 + 1$. That means that $\delta > \frac q 2 + 1$, which contradicts our initial assumption. Therefore $\delta(G) > \frac q 2 + 1$. \end{proof}

\begin{lemma}
If $G$ is a $C_4$-free graph on $n$ vertices with $E_0+1$ edges (with no restrictions on $q$), then any two vertices of degree $q+2$ in $G$ must share exactly one neighbor. \end{lemma}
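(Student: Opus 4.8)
The plan is to dispose of the ``at most one'' direction trivially and then fight for ``at least one'' by contradiction. Since $G$ is $C_4$-free, two distinct vertices can have at most one common neighbor (a second common neighbor would close a $4$-cycle), so any two vertices of degree $q+2$ share at most one neighbor; the content is that they share at least one. Suppose then that $u$ and $w$ both have degree $q+2$ but $\Gamma(u)\cap\Gamma(w)=\emptyset$. First I would record the local consequences: no neighbor of $u$ can be adjacent to $w$ (it would be a common neighbor), and symmetrically, so in the case $u\not\sim w$ the sets $\{u\},\{w\},\Gamma(u),\Gamma(w)$ are pairwise disjoint, while if $u\sim w$ then $w\in\Gamma(u)$, $u\in\Gamma(w)$, and I work with the disjoint sets $\Gamma(u)\setminus\{w\}$ and $\Gamma(w)\setminus\{u\}$ (here one also uses that there are no edges between these two sets, as such an edge would create a $C_4$ through the edge $uw$). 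I would also invoke Lemma~\ref{doubleconnect}: any vertex of degree at most $\frac q2+1$ is adjacent to every vertex of degree $q+2$ and would therefore be a common neighbor of $u$ and $w$; since there is none, every vertex in this configuration has degree at least $\frac q2+2$.

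The engine of the contradiction is a neighborhood-degree count. For a vertex $x$ of degree $q+2$, the number of vertices at distance at most two from $x$ is $1+\sum_{y\in\Gamma(x)}d(y)-2m_x$, where $m_x$ is the number of edges inside $\Gamma(x)$; here I use that $\Gamma(x)$ induces a matching and that every vertex outside $\{x\}\cup\Gamma(x)$ has at most one neighbor in $\Gamma(x)$, both by $C_4$-freeness. Since this count cannot exceed $n=q^2+q$, I obtain $\sum_{y\in\Gamma(x)}d(y)\le n-1+2m_x\le (q+1)^2$. Writing this via the deficiencies $\mathrm{def}(y)=(q+1)-d(y)$ gives $\sum_{y\in\Gamma(x)}\mathrm{def}(y)\ge 2q+3-2m_x$, i.e.\ the neighborhood of a degree-$(q+2)$ vertex carries roughly $2q$ units of positive degree-deficiency. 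Applying this to both $u$ and $w$ and using that their neighborhoods are disjoint, the two neighborhoods together carry at least $4q+6-2(m_u+m_w)$ units of deficiency, whereas the total deficiency of the whole graph is pinned down to $\sum_v\mathrm{def}(v)=(q+1)n-2e=2q-2$ because $e=E_0+1$.

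Balancing these two facts is where the real work lies: the deficiency forced into the two disjoint neighborhoods already outstrips the global budget $2q-2$, so the remaining vertices must supply a correspondingly large \emph{negative} deficiency, i.e.\ a surplus of vertices of degree $q+2$ beyond $u$ and $w$. I would then turn that surplus back on the graph, controlling $\ws$ either through the total two-path estimate of Lemma~\ref{total2plemma} (each further degree-$(q+2)$ vertex injects $\binom{q+2}{2}$ two-paths, which must still fit under the global bound) or through a second deficiency count summed over all degree-$(q+2)$ vertices, so as to reach a numerical contradiction for $q\ge 6$. I expect the closing estimate to be the main obstacle: the bound on the two neighborhoods involves the uncontrolled quantities $m_u,m_w$ and the number of edges between $\Gamma(u)$ and $\Gamma(w)$, and the adjacent case $u\sim w$ must be treated separately from $u\not\sim w$; carefully tracking these correction terms so that the forced deficiency genuinely exceeds what the graph can absorb is the delicate part of the argument.
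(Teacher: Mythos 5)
Your setup (``at most one'' is immediate from $C_4$-freeness; suppose for contradiction that $u$ and $w$ of degree $q+2$ share no neighbor) matches the paper, but your engine is genuinely different and, as sketched, does not close. The paper counts $2$-paths with no endpoint in $\Gamma(u)\cup\Gamma(w)$: at most $\binom{n-2(q+2)}{2}$ of these exist since the neighborhoods are disjoint, while each other vertex $x$ contributes at least $\binom{d(x)-2}{2}$ of them; Jensen over all $n-2$ such vertices then yields a polynomial inequality in $q$ that fails outright for all $q\ge 6$. You instead run a degree-deficiency count, and your arithmetic is correct as far as it goes: since $\Gamma(x)$ induces a matching, $m_x\le\lfloor(q+2)/2\rfloor$, so each degree-$(q+2)$ vertex forces $\sum_{y\in\Gamma(x)}\bigl((q+1)-d(y)\bigr)\ge 2q+3-2m_x\ge q+1$, the two disjoint neighborhoods together with $u,w$ themselves carry total deficiency at least $2q$, and the global budget is $(q+1)n-2(E_0+1)=2q-2$. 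But the shortfall is only $2$ (at most $4$ for odd $q$), so all this proves is that at least two further vertices of degree $q+2$ must lie outside $\{u,w\}\cup\Gamma(u)\cup\Gamma(w)$ to absorb the excess. That is not a contradiction, and nothing available at this point of the paper bounds $|X_{q+2}|$ by $3$: the degree sum $2(E_0+1)$ is perfectly consistent with many degree-$(q+2)$ vertices compensated by vertices of lower degree, and Lemma~\ref{total2plemma} (stated only for $q$ even, whereas the present lemma must hold for all $q$) does not obviously exclude $|X_{q+2}|\ge 4$; indeed Corollary~\ref{noqp2}, which finally pins down $|X_{q+2}|=1$, is proved \emph{after} and \emph{using} this lemma, so leaning on a bound for $|X_{q+2}|$ here risks circularity.

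The step you yourself flag as ``the main obstacle'' is therefore exactly the missing piece: with the uncontrolled correction terms $m_u,m_w$ and the extra case analysis for $u\sim w$, the deficiency ledger balances to within a couple of units and gives no contradiction, whereas the paper's global $2$-path count avoids the issue entirely because averaging $\binom{d(x)-2}{2}$ over all $n-2$ remaining vertices already exceeds $\binom{n-2(q+2)}{2}$ by a comfortable margin. To salvage your route you would need an independent argument that a graph with $E_0+1$ edges cannot have four (or even three) vertices of degree $q+2$ in this configuration, and that is essentially the content of the lemmas you would be trying to prove.
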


\begin{proof} By way of contradiction, suppose that there exist two vertices $u,v\in G$ both of degree $q+2$ that share no neighbors.
We will expand on a technique used by F\"uredi in \cite{F1} to consider 2-paths without endpoints in either $\Gamma(u)$ or $\Gamma(v)$. Denote this quantity by $P$. Let $d=q+2$ be the degree of $u$ and $v$.

We know that ${{n-d(x)} \choose 2}$ is an upper bound for the number of 2-paths without endpoints in the neighborhood of a chosen vertex $x$ with degree $d(x)$, since every two vertices not in the neighborhood of $x$ are endpoints of at most one 2-path. In our case, we have an upper bound on $P$ of ${{n-2d} \choose 2}$ as we are removing two disjoint neighborhoods of degree $d$.

Now, we know that $\sum_{w\neq u,v}{{d(w)} \choose 2}$ is precisely the number of 2-paths with the central vertex not equal to $u,v$, but we must subtract two from each degree to account for the possibility that a given $w$ shares a neighbor with both $u$ and $v$. Thus we find that $\sum_{w\neq u,v}{{d(w)-2} \choose 2}$ is the lower bound for $P$, as it assumes all vertices $w$ share a neighbor with both $u$ and $v$.
We use Jensen's inequality to get the following result: $$\sum_{w\neq u,v}{{d(w)-2} \choose 2}\geq (n-2){{(2(E_0+1)-2(n-2)-2d)/(n-2)} \choose 2}.$$
Then we have an inequality that must hold for the graph $G$ to exist: $${{n-2d}\choose 2}-1\geq P \geq (n-2){{(2(E_0+1)-2(n-2)-2d)/(n-2)} \choose 2}.$$

After simplifying and solving for $q$, we get the following inequality: $$\frac{-q^4-6q^3+17q^2+34q-48}{q^2+q-2}\geq 0.$$ However, this inequality cannot hold for any $q$ in the range we are concerned with. This contradiction shows that $u$ and $v$ must share at least one neighbor, and we know they cannot share more than one because this would create a $C_4$. This result implies that every pair of vertices of degree $q+2$ must have exactly one neighbor in common. \end{proof}

\begin{lemma}
If $G$ is a $C_4$-free graph on $n$ vertices with $E_0+1$ edges (where $q$ can be even or odd), any two vertices of degree $q+2$ must share a neighbor of degree $d < \frac q 2$. 
\end{lemma}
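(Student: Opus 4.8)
The plan is to re-run the 2-path count of the preceding lemma, now exploiting the single shared neighbor to pin down its degree. Let $u,v$ be vertices of degree $q+2$; by the previous lemma they have a unique common neighbor $w$, so it suffices to prove $d(w)<\frac q2$. Assume for contradiction that $d(w)\ge\frac q2$, and let $P$ count the 2-paths having no endpoint in $\Gamma(u)\cup\Gamma(v)$. Since $\Gamma(u)\cap\Gamma(v)=\{w\}$ we have $|\Gamma(u)\cup\Gamma(v)|=2(q+2)-1=2q+3$; assuming first that $u\not\sim v$ (the adjacent case is handled the same way), the $n-2q-3$ vertices outside this union include $u$ and $v$, and each pair of them bounds at most one 2-path, so $P\le\binom{n-2q-3}{2}$. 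Unlike the disjoint case of the previous lemma I subtract nothing here, because the pair $\{u,v\}$ is itself joined by the path $u\text{-}w\text{-}v$.

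For the lower bound I refine the count by adjacency to $w$. Every $x\ne u,v$ has at most one neighbor in $\Gamma(u)$ and at most one in $\Gamma(v)$, else a $C_4$ appears. If $x\sim w$, then since $w\in\Gamma(u)\cap\Gamma(v)$ this forces $w$ to be the only neighbor of $x$ in either set, so $x$ has just one neighbor in $\Gamma(u)\cup\Gamma(v)$ and hence centers at least $\binom{d(x)-1}{2}$ of the paths counted by $P$; if $x\not\sim w$ it centers at least $\binom{d(x)-2}{2}$. Setting $d'(x)=d(x)-1$ for $x\sim w$ and $d'(x)=d(x)-2$ otherwise, we get $P\ge\sum_{x\ne u,v}\binom{d'(x)}{2}$. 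Exactly $d(w)-2$ of the vertices counted are adjacent to $w$ (namely $\Gamma(w)\setminus\{u,v\}$), so the modified degree sum is $S:=\sum_{x\ne u,v}d'(x)=(2e-2(q+2))-2(n-2)+(d(w)-2)$, and Jensen's inequality for $\binom{\cdot}{2}$ gives $P\ge(n-2)\binom{S/(n-2)}{2}$.

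Combining the bounds, a graph with these properties can exist only if $\binom{n-2q-3}{2}\ge(n-2)\binom{S/(n-2)}{2}$. Substituting $e=E_0+1$, $n=q^2+q$, and the smallest admissible value $d(w)=\lceil q/2\rceil$ (which is exactly $q/2$ when $q$ is even) into $S$ and clearing denominators should, as in the previous lemmas, collapse this to a single rational inequality in $q$ that is false throughout our range $q\ge 6$. Because $S$, and therefore the right-hand side, is increasing in $d(w)$, failure at $d(w)=\lceil q/2\rceil$ forces failure for every $d(w)\ge\frac q2$; this is the desired contradiction, so $d(w)<\frac q2$.

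I expect two points to carry the real weight. The first is the closing arithmetic: confirming that the reduced rational inequality genuinely fails for all $q\ge 6$, together with the monotonicity of $S$ in $d(w)$ that lets a single test value suffice. The second is a convexity caveat: applying Jensen to the polynomial $\binom{x}{2}$ only bounds the true path counts from below when the modified degrees $d'(x)$ are nonnegative, so before invoking it I would either establish the crude bound that every vertex has degree at least $2$ or observe directly that a vertex of degree at most $2$ contributes nonnegatively to $P$ and so can only strengthen the inequality. For the remaining case $u\sim v$, the same neighbor-counting bounds apply verbatim, since when $u\sim v$ any $x\sim w$ can be adjacent to neither $u$ nor $v$ without forming the 4-cycle $x\text{-}w\text{-}v\text{-}u$, leaving $w$ as its unique neighbor in $\Gamma(u)\cup\Gamma(v)$.
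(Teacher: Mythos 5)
Your argument is essentially identical to the paper's: the same upper bound $\binom{n-2(q+2)+1}{2}$ on 2-paths avoiding $\Gamma(u)\cup\Gamma(v)$, the same split of the lower bound into $\binom{d(x)-1}{2}$ for neighbors of the common vertex and $\binom{d(x)-2}{2}$ for the rest, the same Jensen step with modified degree sum $2e-2(q+2)-2(n-2)+d(w)-2$, and the same monotonicity-in-$d(w)$ reduction to the single test value $q/2$. The final rational inequality you left unevaluated does work out as you expect (the paper obtains $-\frac{6q^3-25q^2-28q+96}{8q^2+8q-16}\ge 0$, which fails for all $q\ge 6$), so your proof is correct and takes the same route.
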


\begin{proof}
We know from the previous lemma that two vertices of degree $q+2$ must have a neighbor in common. Consider two such vertices $x$ and $y$, and let their unique common neighbor be $u$. By adapting the argument of the previous lemma and applying it to $B = \Gamma(x) \cup \Gamma(y)$, i.e. looking at $P'$, the number of 2-paths with no endpoints in $B$, we obtain the following inequality:
$${{n-2(q+2)+1} \choose 2} \geq \sum_{v \not\in \Gamma(u)} {{d(u)-2} \choose 2} + \sum_{v \in \Gamma(u),v\neq x,y} {{d(u)-1} \choose 2}$$
We see the left hand side is an upper bound on $P'$; we add 1 back in to the number of vertices used in 2-paths because of the intersection between $\Gamma(x)$ and $\Gamma(y)$. The right hand side is separated into two sums.  The first summation sums over all vertices $v$ not in the neighborhood of vertex $u$, and relies on the fact that $v$ can connect to at most 1 one other vertex in the neighborhood of each of the $q+2$ vertices before creating a $C_4$.  The second summation is over all vertices $w \in \Gamma(u)$ and comes from the fact that $w$ can connect to at most 1 other vertex in the union of the neighborhoods of the two $q+2$ vertices before creating a $C_4$.

We use Jensen's inequality on the right hand side to obtain the following expression:
$$\sum_{v \not\in \Gamma(u)} {{d(v)-2} \choose 2} + \sum_{w \in \Gamma(u)} {{d(w)-1} \choose 2} \geq (n-2){{\frac{2e-2(q+2)-2(n-2)+d(u)-2}{n-2}} \choose 2}$$ (We subtract 2 because we should not count $x$ and $y$ for $u$'s total.) 

Clearly, if this inequality fails for a given value of $d(u)$, it must fail for any larger value, since the right side increases with $d(u)$ and the left side is static. Thus, we plug in $\frac q 2$, which yields the following expression: $$- \frac{6q^3-25q^2-28q+96}{8q^2+8-16} \geq 0$$ 

That inequality fails for all relevant $q$, thus the statement is proven. \end{proof}

\begin{corollary} \label{noqp2}
If $q$ is even, any $C_4$-free graph with $E_0+1$ edges and $n$ vertices has exactly one vertex of degree $q+2$.
\end{corollary}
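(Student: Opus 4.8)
The plan is to establish the claim in two parts: that $G$ has at least one vertex of degree $q+2$, and that it has at most one. The first part is already in hand. Since $q$ is even and $G$ has $E_0+1 > E_0$ edges, Theorem~\ref{degseqs} rules out a maximum degree of $q+1$ or less; combined with the earlier lemma bounding the maximum degree by $q+2$, this forces the maximum degree of $G$ to equal $q+2$, so $|X_{q+2}| \geq 1$.

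For the second part I would argue by contradiction. Suppose $|X_{q+2}| \geq 2$, and let $x$ and $y$ be two distinct vertices of degree $q+2$. The immediately preceding lemma guarantees that $x$ and $y$ share a common neighbor $u$ with $d(u) < \frac{q}{2}$. On the other hand, since $q$ is even, the lemma establishing the lower bound on the minimum degree gives $\delta(G) > \frac{q}{2}+1$, so every vertex of $G$ — in particular $u$ — satisfies $d(u) > \frac{q}{2}+1 > \frac{q}{2}$. These two statements are incompatible, so no such pair $x,y$ can exist, and hence $|X_{q+2}| \leq 1$. Combining the two parts yields $|X_{q+2}| = 1$, as desired.

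The entire content of the argument is the direct clash between the forced small-degree common neighbor of any two degree-$(q+2)$ vertices and the minimum-degree bound, both of which were proved earlier; there is no further computation to carry out, so I do not anticipate any genuine obstacle. The only point requiring care is to verify that the hypotheses of the two invoked lemmas — $C_4$-freeness, $E_0+1$ edges, and (for the minimum-degree bound) the parity assumption that $q$ is even — coincide exactly with those of the corollary, which they do. It is worth noting that the shared-neighbor lemma holds for all $q$, so the parity hypothesis enters only through the minimum-degree bound; this is precisely why the corollary is stated for $q$ even.
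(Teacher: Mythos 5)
Your proof is correct and is essentially the argument the paper intends: the paper's one-line proof ("follows from the three previous lemmas") is exactly your clash between the forced common neighbor of degree less than $\frac{q}{2}$ and the minimum-degree bound $\delta(G) > \frac{q}{2}+1$, with existence of a degree-$(q+2)$ vertex coming from the maximum-degree lemma together with Theorem~\ref{degseqs}. You have simply written out explicitly what the paper leaves implicit, including the correct observation about where the parity of $q$ is actually used.
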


\begin{proof} This follows from the three previous lemmas. \end{proof}

%\begin{corollary}
%If $q$ is odd, any $C_4$-free graph with $E_0+1$ edges and $n$ vertices and $\ws \geq 2$ has a vertex $v$ of degree less than or equal to $\frac {q-1} 2 - 1$. Moreover, $v$ is the only vertex with degree lower than $\frac {q+1} 2 + 1$, and it connects to every vertex in $X_{q+2}$. \end{corollary}
%\begin{proof} The first part follows immediately from the preceding lemma. The rest follows from the previous lemma, Lemma 2, and Corollary 3.\end{proof}

%\line(1,0){450}
%\\
Having reduced the hypothetical counterexamples to a single case, we proceed with the proof of the theorem.

\begin{theorem}
For $q$ even, $ex(q^2+q,C_4) \leq \frac 1 2 q(q+1)^2-q$.
\end{theorem}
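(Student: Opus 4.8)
The plan is to argue by contradiction: suppose a $C_4$-free graph $G$ on $n=q^2+q$ vertices with exactly $E_0+1$ edges exists, and extract a contradiction by playing the \emph{exact} count of $2$-paths against the \emph{upper} bound of Lemma \ref{total2plemma}. First I would assemble the structural facts already in hand. The maximum degree of $G$ is at most $q+2$ (the degree bound proved at the outset), and Theorem \ref{degseqs} says that a graph with maximum degree at most $q+1$ has at most $E_0<E_0+1$ edges; hence $G$ must contain a vertex of degree $q+2$, and by Corollary \ref{noqp2} there is exactly one, i.e. $\ws=1$. This single configuration is all that remains to rule out.

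Since $G$ is $C_4$-free, every $2$-path has a unique central vertex, so the number of $2$-paths equals $\sum_{v}\binom{d(v)}{2}$, while Lemma \ref{total2plemma} bounds this above by $qe-\xs+\tfrac12\ws=qe-\xs+\tfrac12$. Using $\sum_v\binom{d(v)}{2}=\tfrac12\bigl(\sum_v d(v)^2-2e\bigr)$, this necessary condition rearranges to
$$\sum_v d(v)^2 \leq 2(q+1)e - 2\xs + 1 .$$
To evaluate the left side I would pass to the deficiencies $s=q+1-d(v)$, writing $m_s$ for the number of vertices of degree $q+1-s$ (so $m_0=\xs$ and the unique degree-$(q+2)$ vertex has $s=-1$). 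Two identities drive the computation: $\sum_v d(v)=2e$ and, crucially, $\sum_v s=(q+1)n-2e=2q-2$, where the clean constant uses $n=q(q+1)$ and $2e=q(q+1)^2-2q+2$. A short calculation then shows the difference between the two sides is exactly
$$\sum_v d(v)^2 - \bigl(2(q+1)e - 2\xs + 1\bigr) = 1 + \sum_{s\geq 1}(s-1)(s+2)\,m_s \geq 1 .$$
Every term on the right is nonnegative, so the left side strictly exceeds the right, contradicting the displayed inequality and hence the existence of $G$.

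The step I expect to be most delicate is not the final algebra but guaranteeing that we really are in the case $\ws=1$. The contradiction hinges on that value: rerunning the computation with $\ws=t$ replaces the conclusion by $\sum_{s\geq1}(s-1)(s+2)m_s\leq 3t-4$, which is already satisfiable once $t\geq 2$. Thus the entire preceding chain — in particular the two lemmas forcing any pair of degree-$(q+2)$ vertices to share a common neighbor of degree less than $q/2$, together with the minimum-degree bound $\delta(G)>q/2+1$ — is exactly what collapses the problem to this one tractable case. The only remaining care is bookkeeping: tracking the half-integer $\tfrac12\ws$, and confirming the constant $\sum_v s=2q-2$, since it is this value that makes the leading terms cancel and leaves the manifestly nonnegative excess above.
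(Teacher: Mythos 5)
Your proposal is correct, and the reduction to the single case $\ws=1$ (via the maximum-degree lemma, Theorem \ref{degseqs}, and Corollary \ref{noqp2}) is exactly the paper's route; the difference lies in how that last case is killed. The paper argues that $G$ must arise from an extremal graph $G'$ with $\Delta(G')=q+1$ by adding one edge, invokes the equality characterization in Theorem \ref{degseqs} to list four explicit degree sequences $A$--$D$, and checks that the 2-path inequality forces an impossible value of the parameter $z$ in each. You instead compare the exact count $\sum_v\binom{d(v)}{2}$ with the bound of Lemma \ref{total2plemma} uniformly over all degree sequences: writing $s=q+1-d(v)$ and $m_s$ for the multiplicities, the constraints $n=q^2+q$, $2e=q(q+1)^2-2q+2$, $m_{-1}=\ws=1$ give $\sum_{s\ge1}s\,m_s=2q-1$, and a direct expansion (which I have checked) yields
\begin{equation*}
\sum_v d(v)^2-\bigl(2(q+1)e-2\xs+1\bigr)=1+\sum_{s\ge1}(s-1)(s+2)\,m_s\ge 1,
\end{equation*}
contradicting the necessary inequality $\sum_v d(v)^2\le 2(q+1)e-2\xs+\ws$. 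This buys two things: it bypasses the (slightly delicate) claim that $G$ decomposes as $G'$ plus an edge together with the case analysis over $A$--$D$, and it is insensitive to how low the small degrees go, since $(s-1)(s+2)\ge0$ for every $s\ge1$; the paper's table only covers degrees down to $q-2$. Your closing observation is also accurate: the same computation with $\ws=t$ gives $\sum_{s\ge1}(s-1)(s+2)m_s\le 3t-4$, which is only vacuously violated when $t=1$, so the full chain of lemmas forcing $\ws=1$ is genuinely indispensable to your finish just as it is to the paper's.
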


\begin{proof} We know from Corollary \ref{noqp2} that we must consider only the case when $\ws = 1$. It is clear that any graph $G$ with $n$ vertices, $E_0+1$ edges, and $\ws = 1$ can only be created by taking a graph $G'$ with $E_0$ edges and $\Delta(G) = q+1$ and connecting a vertex of degree $q+1$ to a vertex of strictly lower degree. We know from Lemma \ref{degseqs} that $G'$ can have one of two possible degree sequences up to a parameter $z$. When we examine all the ways to make the necessary connection, we get these four possible degree sequences for $G$:
\begin{center}
    \begin{tabular}{ | l | l | l | l | l | l |}
    \hline
    & $|X_{q+2}|$ & $|X_{q+1}|$ & $|X_q|$ &$|X_{q-1}|$ &$|X_{q-2}|$ \\ \hline
    A & 1 & $q^2-q+z$ & $2q-2z-1$ & $z$ & 0\\ \hline 
    B & 1 & $q^2-q+z$ & $2q-2z$ & $z-2$ & 1\\ \hline 
    C & 1 & $q^2-q+z+1$ & $2q-2z-2$ & $z-1$ & 1\\ \hline
    D & 1 & $q^2-q+z-1$ & $2q-2z+1$ & $z-1$ & 0\\ \hline 
    \end{tabular}
\end{center}

Since we have a specific degree sequence, we can use Lemma \ref{total2plemma} concerning the total number of 2-paths in $G$ to generate the following inequality: $$qe - \xs + \frac 1 2 \ws \geq \ws {{q+2} \choose 2}$$ $$+ \ \xs{{q+1} \choose 2} + \ys {q \choose 2} + |X_{q-1}|{{q-1} \choose 2} + |X_{q-2}|{{q-2} \choose 2}$$

When we solve that inequality for $z$, we get the following results: 

\begin{center}
{

    \begin{tabular}{| l | l |}
    \hline
 \bigstrut   A & $z \leq - \frac 1 4$ \\ \hline
 \bigstrut   B & $z \leq -\frac 3 4 $ \\ \hline
 \bigstrut   C & $z \leq -\frac 7 4 $ \\ \hline
\bigstrut    D & $z \leq \frac 3 4 $ \\ \hline
    \end{tabular}}
\end{center}

Obviously, these values of $z$ lead to impossible degree sequences, thus no such $G$ is possible. \end{proof}

\begin{acknowledgements}
This research was conducted at an NSF Research Experience for Undergraduates (grant number DMS-0755450) at the University of Wyoming in the summer of 2011. We would like to thank Colin Garnett, Bryan Shader, and everyone else affiliated with the program for their assistance.
\end{acknowledgements}

\end{document}